\newtheorem{Definition}{Definition}
\newtheorem{Lemma}{Lemma}
\newtheorem{Remark}{Remark}
\newcommand{\dt}{\,\partial_t\, }
\newcommand{\dx}{\,\partial_x\, }
\newcommand{\Dt}{\Delta t}
\newcommand{\Dx}{\Delta x}
\newcommand{\Dv}{{\Delta v}}
\newcommand{\dd}{\,\mathrm{d}\,}
\newcommand{\ddt}{\frac{\dd}{\dd t}}
\newcommand{\dxx}{\,\partial_{xx}\, }
\newcommand{\dxxx}{\,\partial_{xxx}\, }
\newcommand{\ep}{\varepsilon}
\newcommand{\N}{\mathbb{N}}
\newcommand{\R}{\mathbb{R}}
\newcommand{\TT}{\mathbb{T}}
\newcommand{\T}{\mathsf{T}}
\newcommand{\M}{\mathcal{M}}
\newcommand{\II}{\mathcal{I}}
\newcommand{\J}{\mathcal{J}}
\newcommand{\iph}{{i+\frac{1}{2}}}
\newcommand{\imh}{{i-\frac{1}{2}}}
\newcommand{\lla}{\left\langle}
\newcommand{\rra}{\right\rangle}
\newcommand{\lbr}{\left\lbrace}
\newcommand{\rbr}{\right\rbrace}
\newcommand{\overbar}[1]{\mkern 1.5mu\overline{\mkern-1.5mu#1\mkern-1.5mu}\mkern 1.5mu}
\newcommand{\Oeps}[1]{\mathcal{O}(\ep^{#1})}
\begin{document}

\title[A space-time hybrid parareal method for kinetic equations]{A space-time hybrid parareal method for kinetic equations in the diffusive scaling}
\author{Tino Laidin}
\address[Tino Laidin]{Univ Brest, CNRS UMR 6205, Laboratoire de Mathématiques de Bretagne Atlantique, F-29200 Brest, France}
\email{tino.laidin@univ-brest.fr}

\begin{abstract}
  We present a novel multiscale numerical approach that combines parallel-in-time computation with hybrid domain adaptation for linear collisional kinetic equations in the diffusive regime. The method addresses the computational challenges of kinetic simulations by integrating two complementary strategies: a parareal temporal parallelization method and a dynamic spatial domain adaptation based on perturbative analysis.
  The parallel in time approach employs a coarse fluid solver for efficient temporal propagation coupled with a fine, spatially-hybridized, kinetic solver for accurate resolution. Domain adaptation is governed by two criteria: one measuring the deviation from local velocity equilibrium, and another based on macroscopic quantities available throughout the computational domain. An asymptotic preserving micro-macro decomposition framework handles the stiffness of the original problem.
  This fully hybrid methodology significantly reduces computational costs compared to full kinetic approaches by exploiting the lower dimensionality of asymptotic fluid models while maintaining accuracy through selective kinetic resolution. The method demonstrates substantial speedup capabilities and efficiency gains across various kinetic regimes.

  \textsc{2020 Mathematics Subject Classification:}
    \emph{Primary:}
      35Q20, 
      65M22, 
      82C40, 
    \emph{Secondary:}
      65M55, 
      65Y05. 

\end{abstract}

  \keywords{Kinetic equations, micro--macro decomposition, parareal, domain adaptation, multiscale methods,
  }

\maketitle

\section{Introduction}
\label{sec:Intro}

The numerical simulation of kinetic equations is a longstanding challenge due to their high dimensionality and the presence of multiple scales. A prototypical example is the Vlasov–Poisson system with linear BGK collisions, which arises in collisional plasma physics and semiconductor modeling. The unknown is the distribution function $f(t,x,v)$ depending on time $t \geq 0$, position $x \in \mathbb{T}^{d_x}$ the $d_x$-dimensional torus, and velocity $v \in \mathbb{R}^{d_v}$. It describes the probability of observing a particle at time $t$ and position $x$ moving at velocity $v$. In the \emph{diffusive scaling}, the model reads
\begin{equation}\label{eq:kinetic}\tag{$\mathcal{P}^\ep$}
  \lbr\begin{aligned}
    &\ep \dt f + v \cdot \nabla_x f + E \cdot \nabla_v f = \frac{1}{\ep}\mathcal{Q}(f),\\
    &f(0,x,v)=f_\text{in}(x,v),
  \end{aligned}\right.
\end{equation}
where $\ep > 0$ is the Knudsen number, $E = -\nabla_x \phi$ is the self-consistent electric field, and $\mathcal{Q}(f)=\rho\M-f$ is the linear BGK operator driving relaxation toward a local equilibrium. Here $\rho=\lla f\rra=\int_{\R^{d_v}}f\dd v$ denotes the macroscopic density, and the Maxwellian is defined by
\begin{equation}
  \M(v) = \frac{1}{(2\pi)^{d_v/2}}\exp\left(\frac{|v|^2}{2}\right).
\end{equation}
The Poisson equation closes the system:
\begin{equation}\label{Poisson}
  \Delta \phi^\ep=\rho^\ep-\overbar{\rho}\quad\text{with}\quad\overbar{\rho}=\iint f_\text{in} \dd x\dd v.
\end{equation}
The collision operator conserves mass, i.e.
\begin{equation}
\int_{\mathbb{R}^{d_v}} \mathcal{Q}(f), dv = 0,
\end{equation}
so that the total number of particles is preserved.

The Knudsen number, defined as the ratio between the mean free path of particles and a characteristic length, quantifies the importance of collisions and determines the current regime. When $\ep\sim1$, transport dominates and only few collisions occur: this is the kinetic regime. Conversely, when $\ep \ll 1$, collisions dominate and the system rapidly approaches a local Maxwellian equilibrium. In this fluid regime, macroscopic quantities drive the dynamics. In the diffusive scaling \cite{GoudonPoupaudDegond2000}, the macroscopic density $\rho^\ep$ converges towards a limit $\rho$ solution to a drift–diffusion system
\begin{equation}\label{DD}\tag{$\mathcal{P}^0$}
  \lbr\begin{aligned}
  &\dt \rho(t,x) -\nabla_x\cdot J(t,x) = 0,\quad J(t,x)=\nabla_x\rho(t,x) - E(t,x)\rho(t,x),\\
  &\rho(0,x)=\rho_\text{in}(x),
  \end{aligned}\right.
\end{equation}
with $E$ determined by the Poisson equation \eqref{Poisson}.

From the computational viewpoint, equation~\eqref{eq:kinetic} suffers from two intrinsic difficulties. First, the distribution function depends on both position and velocity, leading to the curse of dimensionality. In general, the phase space is six-dimensional, making grid-based methods very costly. Second, the stiff relaxation term $(1/\ep)\mathcal{Q}(f)$ imposes severe constraints of the form $\Dt\leq\ep\Dx$ on explicit time discretizations. These challenges motivated the development of \emph{Asymptotic-Preserving} (AP) schemes, which remove the stiffness barrier by capturing the asymptotic limit uniformly in $\ep$~\cite{Klar1999,Jin1999,DimarcoPareschi2014ActaNumerica,JinReview2022}. As shown in Figure~\ref{fig:AP}, the AP property ensures that $\mathcal{S}^\ep$ converges to $\mathcal{S}^0$ as $\ep \to 0$, with $\mathcal{S}^0$ recovering the solution to the asymptotic model in the limit of vanishing discretization parameters. While AP schemes allow time steps independent of $\ep$, they remain globally kinetic and thus computationally demanding, even in fluid regimes where the dynamics should be cheap to compute.

\begin{figure}
  \centering
  \begin{tikzpicture}[xshift=.5cm]
    \node (P1) at (0,0) {$\mathcal{S^\ep}$};
    \node (P2) at (2,0) {$\mathcal{P}^\ep$};
    \node (P3) at (0,-2) {$\mathcal{S}^0$};
    \node (P4) at (2,-2) {$\mathcal{P}^0$};

    \draw[->,dashed] (P1) -- (P2) node[midway, above] {$\Delta \to 0$};
    \draw[->,dashed] (P3) -- (P4) node[midway, below] {$\Delta \to 0$};
    \draw[->] (P1) -- (P3) node[midway, left] {$\ep \to 0$};
    \draw[->] (P2) -- (P4) node[midway, right] {$\ep \to 0$};

    \node (helperleft) at ($(P1) + (-0.9,0)$) {};
    \node (helperbottom) at ($(P3) + (0,-0.2)$) {};
  \end{tikzpicture}
  \caption{Asymptotic-Preserving diagram}
  \label{fig:AP}
\end{figure}
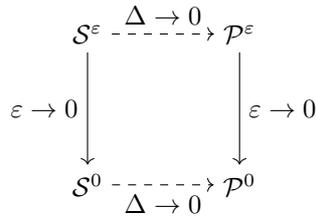

To reduce the numerical cost, various \emph{hybrid kinetic–fluid methods} have been proposed. The general idea is to exploit the validity of fluid approximations near local equilibrium, restricting kinetic solvers to localized non-equilibrium subdomains. This reduces complexity by leveraging lower-dimensional macroscopic solvers in large regions of the domain. A key challenge is to design reliable criteria to decompose the domain into kinetic and fluid parts, using either macroscopic indicators or measures of the distance from equilibrium \cite{Tiwari1998,TiwariKlar1998,DegondDimarcoMieusens2007,KolobovArslanbekovAristovFrolovaZabelok2007}. In this work, we adopt the approach introduced in \cite{LevermoreMorokoffNadiga1998,Tiwari2000ApplicationOM,FilbetRey2015,FilbetXiong2018} under the hydrodynamic scaling, and later extended to the diffusive scaling in \cite{Laidin2023,LaidinRey2023}. The main idea is to rely on two criteria derived from a perturbative analysis. First, a hierarchy of macroscopic models is constructed through the Chapman--Enskog expansion of the distribution function, leading to a moment realizability criterion that quantifies the deviation of kinetic moment dynamics from their fluid counterparts. Second, the magnitude of the perturbation is used to measure the deviation from the Maxwellian equilibrium. These dynamic domain adaptation methods are typically applied cell-wise in time, although their efficiency may be reduced when kinetic regions remain spatially extended.

Other strategies have been explored, such as buffer zones with smooth transition functions \cite{DegondJinMieusens2005,DegondDimarcoMieusens2007,DimarcoMieussensRispoli2014} within the continuous model, or velocity-space hybridization based on a Maxwellian–perturbation decomposition \cite{DimarcoPareschi2008,CrestettoCrouseillesDimarcoLemou2019,HorstenSamaeyBaelmans2019,HorstenSamaeyBaelmans2020}. While these methods provide substantial savings, their benefits also remain limited in highly kinetic regimes.

In parallel, \emph{parallel-in-time algorithms} such as the parareal~\cite{LionsMadayTurinici2001,MadayTuricini2002} and the MultiGrid Reduction In Time (MGRIT)~\cite{FalgoutFriedhoffKolevMaclachlanSchroder2014,DobrevKolevPeterssonSchroder2017} methods have emerged as powerful tools to accelerate time integration of dynamical systems. The parareal method combines a coarse, inexpensive propagator with a fine, accurate solver, iteratively correcting the coarse prediction in parallel, while MGRIT extends this idea to a multilevel framework. These methods are known to converge rapidly for diffusion-dominated problems but also to face significant difficulties in advection-dominated dynamics~\cite{FarhatChandesris2003,Bal2005,Gander2008,EghbalGerberAubanel2017,NielsenBrunnerHesthaven2018,AngelGotschelRuprecht2021,HessenthalerSouthworthNordslettenRorhleFalgoutSchroder2020,SugiyamaSchroderSouthworthFriedhoff2023,DeSterckFalgoutKrzysik2023}. The kinetic regime therefore appears particularly challenging. Nevertheless, given the high cost of kinetic simulations, even slowly converging parallel-in-time algorithms can provide substantial computational savings. Conversely, since the asymptotic limit \eqref{DD} of \eqref{eq:kinetic} is diffusive, good convergence can be expected in this regime. The application of the parareal method to multiscale problems has received particular attention~\cite{DuarteMassotDescombe2011,LegollLelievreSamaey2013,GrigoriHirstoagaSalomon2023,SamaeySlawig2023,BossuytVandewalleSamaey2023}, where coarse propagators are constructed from reduced asymptotic models. In addition, recent works \cite{GanderOhlbergerRave2024,LaidinRey2025} emphasize that coarse propagation within the parareal method need not be restricted to temporal coarsening, but can instead involve coarser spatial grids or dimension reduction, a perspective particularly relevant for kinetic equations. These strategies naturally connects with the Heterogeneous Multiscale Method (HMM) framework~\cite{WeinanEngquist2003,Weinan2011PrinciplesOM,AbdulleWeinanEngquistVanden-Eijnden2012}.

Building on these ideas, a kinetic–fluid parareal method was recently introduced in \cite{LaidinRey2025}. In this setting, the fluid, hyperbolic model provides the coarse solver, while the kinetic solver captures fine-scale effects. In this scaling, efficiency is delicate. The combination of hyperbolic (presence of shocks) and kinetic (transport-dominated) solvers can hinder convergence. In our setting, applying a diffusive scaling eliminates the challenges of the hyperbolic limit, replacing them with a dynamics more parareal-compatible.

The present work introduces a novel space–time multiscale numerical methodology that combines three ingredients namely AP schemes, hybridization in space, and parallel-in-time acceleration into a unified framework for the Vlasov--Poisson--BGK system in the diffusive scaling. The key components are:
\begin{itemize}
  \item an \emph{asymptotic-preserving micro–macro scheme} that handles stiffness and ensures consistency with the diffusion limit;
  \item a \emph{hybrid kinetic–fluid phase-space solver}, dynamically adapting the computational domain based on deviation from local equilibrium and macroscopic indicators;
  \item a \emph{multiscale parallel-in-time strategy}, coupling the fluid solver as a coarse propagator with the above hybrid solver as the fine propagator via the parareal algorithm.
\end{itemize}
This combination leverages the robustness of AP schemes in stiff regimes, the dimensional reduction of hybrid solvers, and the scalability of the parareal algorithm. The resulting method yields significant computational savings compared to an AP, but fully kinetic approach, while preserving accuracy across both kinetic and fluid regimes.

The paper is organized as follows. Section~\ref{sec:Hybrid} recalls the dynamic domain adaptation and numerical schemes. Section~\ref{sec:Parareal} presents the parallel-in-time algorithm and its coupling with the hybrid solver, leading to a space–time hybrid solver. The implementation of the method is briefly discussed in Section~\ref{sec:Implementation} and section~\ref{sec:Num} presents numerical experiments illustrating the efficiency and accuracy of the method.

\section{Dynamic domain adaptation}
\label{sec:Hybrid}
In this section, we recall the building blocks of the dynamic domain adaptation introduced in \cite{Laidin2023,LaidinRey2023} for kinetic equations in the diffusive scaling. We start by presenting a hierarchy of macroscopic models used to define coupling criteria and then present the numerical schemes used.

\subsection{Macroscopic hierarchy}
To obtain a macroscopic coupling criterion, a hierarchy of macroscopic models is built from an asymptotic expansion of the distribution. We introduce the truncated Chapman-Enskog expansion of $f^\ep$ at order $L\in\N$:
\begin{equation}\label{eq:ChapmanEnskog}
    f^\ep(t,x,v)=\rho^\ep(t,x)\M(v) + \sum^L_{l=1}\ep^l h^{(l)}(t,x,v).
\end{equation}
By inserting \eqref{eq:ChapmanEnskog} into the original equation \eqref{eq:kinetic}, one can identify powers of epsilon to obtain 
\begin{subequations}\label{Identification}
\begin{alignat}{2}
l=0:&\quad &h^{(1)} = &-\T(\rho^\ep\M),\\
l=1:&\quad &h^{(2)} = &-\dt(\rho^\ep\M) -\T(h^{(1)}),\\
2\leq k\leq L-1:&\quad &h^{(l+1)} = &-\dt h^{(l-1)} - \T(h^{(l)}),
\end{alignat}
\end{subequations}
where $\T f=v \cdot \nabla_x f + E \cdot \nabla_v f$ is the transport operator. The hierarchy of macroscopic models is obtained by truncation of \eqref{eq:ChapmanEnskog} plugged into \eqref{eq:kinetic} and integrated in velocity. Note that the first order allows us to formally recover the asymptotic model \eqref{DD}.

Let us now recall the macroscopic model obtained by setting $L=3$. In the $d_x=1$ and $d_v=3$ setting, it was shown in \cite[Section 2]{LaidinRey2023} that the higher order macroscopic model writes
\begin{equation}\label{HigherOrderModel}
    \dt\rho^\ep-\dx J^\ep=-\ep^2\mathcal{R}+\Oeps{4},
\end{equation}
where the remainder $\mathcal{R^\ep}$ is given by
\begin{equation}\label{Remainder}
    \mathcal{R}^\ep=-\dxxx J^\ep + E\dxx J^\ep + (2\rho^\ep-3\overbar{\rho})\dx J^\ep +2J^\ep\dx\rho^\ep -\dx\rho^\ep\dt E^\ep.
\end{equation}
In the following, the $d_x=1$ and $d_v=3$ will be considered and extensions of \eqref{HigherOrderModel} to higher dimensions are discussed in \cite[Appendix A]{LaidinThese}.

\subsection{Domain indicators}
\label{subsec:Indicators}

The idea of the dynamic domain decomposition is to approximate the kinetic system by its fluid counterpart whenever possible, thereby exploiting the reduced computational cost of fluid solvers. To ensure accuracy, the domain partition must dynamically track the local state of the solution. At each time step, the status of each spatial cell is updated according to two coupling thresholds, $\delta_0$ and $\eta_0$, defined through the following indicators:

\begin{itemize}
\item If $\left|\mathcal{R}^{\ep}\right| < \delta_0$, then the dynamic of the solution to \eqref{HigherOrderModel} is sufficiently close to the one of limit model \eqref{DD}, and the fluid description can be employed;
\item Let $g = f - \rho\M$. If $\left\|g^{\ep}\right\| < \eta_0$, the solution is close to local equilibrium in velocity space, and again the fluid description is admissible.
\end{itemize}

If neither condition is satisfied, the corresponding cell remains (or switches to) kinetic. In practice, the thresholds $\delta_0$ and $\eta_0$ are chosen to balance accuracy and efficiency: too large values may trigger the fluid approximation prematurely, while too small values may reduce computational gains. Furthermore, as $\mathcal{R}^\ep$ involves high-order derivatives, the finite differences presented in Table~\ref{tab:finiteDiff} are used for its approximation in each cell. Regarding the computation of the criterion on the perturbation, a classical $L^2$-norm is used.

\begin{table}
    \centering
    \begin{tabular}{|c|c|c|c|c|c|c|c|}
        \hline
        \diagbox{Derivative}{Index} & -3 & -2 & -1 & 0 & 1 & 2 & 3\\
        \hline
        1 & -1/60 & 3/20 & -3/4 & 0 & 3/4 & -3/20 & 1/60\\
        \hline
        2 & 1/90 & -3/20 & 3/2 & -49/18 & 3/2 & -3/20 & 1/90\\
        \hline
        3 & 1/8 & -1 & 13/8 & 0 & -13/8 & 1 & -1/8\\
        \hline
        4 & -1/6 & 2 & -13/2 & 28/3 & -13/2 & 2 & -1/6\\
        \hline
    \end{tabular}
    \caption{Central finite differences coefficients.}
    \label{tab:finiteDiff}
\end{table}

\subsection{Numerical schemes}

\textbf{Mesh and discretization.}  
We restrict ourselves to the case $d_x=1$ and $d_v=3$ used in the numerical experiments. Let $\II=\{1,\dots,N_x\}$ and $\mathcal{J}=\{1,\dots,N_v\}$. The $(x,v)$--phase space is discretized in a finite-volume fashion with control volumes $K_{ij}=\mathcal{X}_i\otimes\mathcal{V}_j$, for $i\in\II$ and $j=(j_x,j_y,j_z)\in\mathcal{J}^3$. We denote by $\Delta x$ and $\Delta v^3$ the uniform volumes of $\mathcal{X}_i$ and $\mathcal{V}_j$ respectively.  

\noindent\textbf{Micro--macro scheme.}  
Following \cite{Laidin2023,LaidinRey2023}, we use a finite volume relaxed AP micro--macro scheme introduced in \cite{LemouMieussens2008,Lemou2010,CrouseillesLemou2011}, based on the decomposition
\begin{equation}
  f(t,x,v) = \rho(t,x)\M(v) + g(t,x,v),
\end{equation}
where $g$ is the perturbation from the Maxwellian. The resulting scheme writes
\begin{equation}\label{eq:SMM}\tag{$\mathcal{S}^\text{MM}$}
  \lbr\begin{aligned}
    g_{\iph,j}^{\ep,n+1} &= g_{\iph,j}^{\ep,n}e^{-\Dt/ \ep^2} - \ep(1-e^{-\Dt/ \ep^2})
        \left(\frac{T^{\ep,n}_{\iph,j}}{\Dx\Dv^3}+\xi_j\M_jJ^{\ep,n}_\iph\right),\\
    \rho_i^{\ep,n+1} &= \rho_i^{\ep,n} - \frac{\Dt}{\ep\Dx}
        \left(\langle \xi g^{\ep,n+1}_\iph \rangle_\Delta-\langle \xi g^{\ep,n+1}_\imh \rangle_\Delta \right), \\
    E^{\ep,n}_\iph&-E^{\ep,n}_\imh = (\rho_i^{\ep,n}-\overbar{\rho})\Dx,
  \end{aligned}\right.
\end{equation}
where $\xi_j$ denotes the $x$-component of $v_j$, and $T^{\ep,n}_{\iph,j}$ is an upwind discretization of the transport term $\T g - \dx\lla v_x g\rra\M$. For fixed $\Dx,\Dv>0$, the scheme is AP in the diffusion limit, independently of the initial data \cite{Laidin2023}.  

\noindent\textbf{Limit scheme.}  
As $\ep\to0$, one recovers a discretization of \eqref{DD}:
\begin{equation}\label{eq:LimitScheme}\tag{$\mathcal{S}^0$}
  \lbr\begin{aligned}
    \rho_i^{n+1} &= \rho_i^{n} +  m_2^{\Dv}\frac{\Dt}{\Dx}\left(J_\iph^{n}-J_\imh^{n}\right), \\
    J_\iph^{n} &= \frac{\rho^n_{i+1}-\rho^n_{i}}{\Dx} - E_\iph\rho^n_\iph,\quad
    E^{n}_\iph- E^{n}_\imh = (\rho_i^{n}-\overbar{\rho})\Dx,
  \end{aligned}\right.
\end{equation}
with $m_2^{\Dv} =\sum_{l\in\mathcal{J}}v_l^2M_l\Dv_l$.  

\noindent\textbf{Hybrid scheme.}  
Combining \eqref{eq:SMM} and \eqref{eq:LimitScheme} then yields the hybrid micro--macro scheme. The AP scheme~\eqref{eq:SMM} is applied in kinetic cells within $\Omega_\mathcal{K}$, while the limit scheme~\eqref{eq:LimitScheme} is used in cells of $\Omega_\mathcal{F}$.
\begin{equation}\label{eq:SHMM}\tag{$\mathcal{S}^\text{HMM}$}
\left\{
\begin{aligned}
g_{\iph,j}^{\ep,n+1} &= g_{\iph,j}^{\ep,n} e^{-\Dt/\ep^2}
  - \ep\!\left(1-e^{-\Dt/\ep^2}\right)
    \left( \frac{T^{\ep,n}_{\iph,j}}{\Dx\Dv^3}
           + \xi_j M_j J^{\ep,n}_\iph \right),
&\,&
\mathcal{X}_{\iph}\in\Omega^n_\mathcal{K},
\\[0.45em]
g_{\iph,j}^{\ep,n+1} &= 0,
&\,&
\mathcal{X}_{\iph}\in\Omega^n_\mathcal{F},
\\[0.75em]
\rho_i^{\ep,n+1} &= \rho_i^{\ep,n}
  + \frac{\Dt}{\Dx}\,\widetilde{J}_i^{\ep,n},
&\,&
\\[0.75em]
\widetilde{J}_i^{\ep,n} &=
  \begin{cases}
  -\dfrac{1}{\ep}\big(
     \langle \xi g^{\ep,n+1}_\iph\rangle_\Delta
     - \langle \xi g^{\ep,n+1}_\imh\rangle_\Delta
     \big),\\[0.65em]
  m_2^{\Delta v}\,(J_\iph^{n}-J_\imh^{n}),
  \end{cases}
&\,&
\begin{aligned}
\mathcal{X}_i\in\Omega^n_\mathcal{K},\\[0.65em]
\mathcal{X}_i\in\Omega^n_\mathcal{F},
\end{aligned}
\\[0.75em]
E^{\ep,n}_\iph &- E^{\ep,n}_\imh = (\rho_i^{\ep,n}-\overbar{\rho})\,\Dx.
&\,&
\end{aligned}
\right.
\end{equation}

with the update of $\Omega^n_\mathcal{K}$ and $\Omega^n_\mathcal{F}$ detailed in Section~\ref{subsec:Indicators}. Note that the perturbation $g$ is set to $0$ in $\Omega_\mathcal{F}$, corresponding to the assumption that the solution is exactly at equilibrium. A finer treatment would replace $g$ by its expansion in~\eqref{eq:ChapmanEnskog}, which can be computationally expensive when evaluated repeatedly, or rely on a multilevel hybridization as proposed in~\cite{CaparelloPareschiRey2025}. In practice, for efficiency, the array representing $g$ is not updated in $\Omega_\mathcal{F}$ and is modified only when a state change occurs.

\section{Multiscale parareal algorithm}
\label{sec:Parareal}

In the spirit of \cite{LaidinRey2025}, we now construct a multiscale parareal algorithm based on the two solvers \eqref{eq:SHMM} and \eqref{eq:LimitScheme}.

\subsection{Parallel in time integration}
Let us now recall the parareal algorithm adapted to our multiscale PDE setting. We are interested in the accurate approximation of the macroscopic density $\rho$ and not on the full kinetic distribution itself. Assuming continuous time, the density $\left(\rho(t)\right)_{i\in\II}$ is solution to an ODE of the form
\begin{equation*}
    \lbr\begin{aligned}
    &\ddt \rho(t) = \mathcal{S}(\rho),\quad t\in[0,T],\\
    &\rho(0) = \rho_\text{in},
    \end{aligned}\right.
\end{equation*}
where $\mathcal{S}$ stands for the numerical methods used. The goal of the parareal algorithm is to approximate the solution at some fixed discrete times $T^n$,
\begin{equation*}
    \rho^n \approx \rho(T^n),\quad n\in\{0,\dots,N_g\},\, N_g\in\N.
\end{equation*}
To do so, instead of using a single time integrator (or propagator), two solvers are coupled through an iterative process: the \emph{parareal iterations}. The first solver is denoted by $\mathcal{G}(T^n,T^{n+1},\rho^n)$. It must be cheap to compute numerically and provides some numerical \emph{guesses} of the behavior of the solution. The second solver, that we denote by $\mathcal{F}(T^n,T^{n+1},\rho^n)$, must be very accurate, and is used to improve the aforementioned guesses.

While the propagator $\mathcal{G}$ is less costly, its drawback is naturally its precision. However, it is corrected by the accurate, computationally expensive, propagator $\mathcal{F}$. The algorithm then unfolds as follows:
\smallskip
\begin{enumerate}
    \item Divide the time domain in $N_g\in\N$ subintervals $[T^n,T^{n+1}]$, $n\in\{0,\dots,N_g-1\}$;
    \item Perform a first coarse guess: 
        \begin{equation*}
            \rho^{n+1,0} = \mathcal{G}(T^n,T^{n+1},\rho^{n,0}) = \rho_\text{in};
        \end{equation*}
    \item Refine the guess through the parareal iterations:
    \begin{equation*}
        \rho^{n+1,k+1} = \mathcal{G}(T^n,T^{n+1},\rho^{n,k+1}) + \mathcal{F}(T^n,T^{n+1},\rho^{n,k}) - \mathcal{G}(T^n,T^{n+1},\rho^{n,k}),
    \end{equation*}
    where $k=1,2,\dots$ denotes the $k$\textsuperscript{th} parareal iteration.
\end{enumerate}
\smallskip
The advantage of this iterative algorithm is its ability to compute the expensive fine propagations in parallel. Therefore, as long as the number of parareal iterations remains small enough to obtain a given accuracy, one can expect a reduction of the computational cost of the time integration.

\subsection{Linking the scales}
A crucial point of multiscale algorithms is about linking the various scales. Thanks to the micro-macro framework, the unknown of the system are now the perturbation at the kinetic level and the density at the fluid one. Consequently, macroscopic data are available at any point in space and any time, without the need of projecting a distribution towards its moments. Nonetheless, one still needs to define a lifting procedure to recover the perturbation. To lift a macroscopic density $\rho(t,x)$ to a phase-space perturbation, we build upon the Chapman-Enskog expansion \eqref{eq:ChapmanEnskog}. The perturbation, that is the unknown of scheme \eqref{eq:SHMM}, can be reconstructed as follows:
\begin{Definition}\label{def:lift}
    Let $L\in\N$. For a macroscopic quantity $\rho(t,x)$, the lifting of order~$L$, denoted $\mathcal{L}^L\rho(t,x,v) = g(t,x,v)$, is defined as
    \begin{equation*}
         \mathcal{L}\rho = g =\sum_{l=1}^{L}\ep^l h^{(l)},
    \end{equation*}
    where the perturbations $h^{(l)}$ depend on velocity moments of $\M$ and on the derivatives of $\rho$ and $E$.
\end{Definition}
It is worth mentioning that the mass of the perturbation $g$ is $0$ by construction. More precisely, the zeroth order moment of $g$ in velocity is $0$. This property is of particular importance in the discrete setting to ensure mass conservation in the end. Indeed, the mass is only be supported by the density $\rho$ in the micro-macro decomposition $f=\rho\M+g$.

\subsection{Multiscale algorithm}
Once the parallel in time integration is defined, it can be adapted to the multiscale setting thanks to a discrete counterpart of the lifting operator $\mathcal{L}$. Taking advantage of the micro-macro framework, the macroscopic quantity $\rho$ is always available from \eqref{eq:SHMM} and \eqref{eq:SMM}. It only remains to reconstruct the perturbation $g$. For all $(i,j)\in\II\times\J$, we define
\begin{equation*}
    (\mathcal{L}\rho)_{ij} = g_{ij} = \sum_{l=1}^{L}\ep^l h^{(l)}_{ij}.
\end{equation*}
With this definition, one can actually check that the discrete integral in velocity of such $g_{ij}$ is $0$ and deduce that the discrete perturbation has zero mass. To approximate the perturbations $h^{(l)}$, high-order finite differences are used (See Table~\ref{tab:finiteDiff}). It is worth mentioning that for $L=2$, the perturbation involves up to fourth order derivatives of the density and third order on the electrical field. In addition, the lifting is defined at cell centers but the scheme \eqref{eq:SMM} requires a perturbation at interfaces. In practice, average interface values of the density can be used in the lifting procedure.

Let us now precise the fine and coarse solvers. We set $\mathcal{F} = \mathcal{S}^\text{HMM}$ the hybrid micro-macro scheme that takes as input a macroscopic density and a perturbation. The coarse solver is set to $\mathcal{G} = \mathcal{S}^0$ and only depends on the density. The fully multiscale method is summarized in Algorithm~\ref{algo:paraMS}.

Another crucial point of the method is the computation of the electrical field from the coupling with the Poisson equation. We consider explicit in time discretization and the electrical field is therefore obtained from the density when needed. In particular, the parareal correction is not applied to the electrical field. The corrected field is computed from the corrected density.

\begin{algorithm}
    \caption{Multiscale kinetic parareal Algorithm}\label{algo:paraMS}
    \begin{algorithmic}[1]
    \Require $\rho^{0,0}$ and $g^{0,0}$
    \State Solve for $E^{0,0}$ using $\rho^{0,0}$
    \For{$n = 1,\dots, N_g$} \Comment{First coarse guess}
        \State $\rho^{n,0}\gets \mathcal{S}^0\left(\rho^{n-1,0}\right)$ 
        \State Solve for $E^{n,0}$ using $\rho^{n,0}$
        \State $g^{n,0}\gets \mathcal{L}^L\left(\rho^{n,0}\right)$ 
    \EndFor
    \vspace*{.2cm}
    \While{$k \leq K$ \textbf{or} error $\geq$  tol}\Comment{Parareal iterations}
    \vspace*{.2cm}
        \For{$n = k,\dots, N_g$} \Comment{Compute density jumps in parallel}
            \State $\Delta^{n,k} \gets \mathcal{S}^\text{HMM}(\rho^{n-1,k}, g^{n-1,k}) - \mathcal{S}^0\left(\rho^{n-1,k}\right)$
        \EndFor
        \vspace*{.2cm}
        \For{$n = k,\dots, N_g$} \Comment{Sequential correction}
            \State $\rho^{n,k+1} \gets \mathcal{S}^0\left(\rho^{n-1,k}\right) + \Delta^{n,k}$
            \State Solve for $E^{n,k}$ using $\rho^{n,k}$
            \State $g^{n,k+1}\gets \mathcal{L}^L\left(\rho^{n,k+1}\right)$ 
        \EndFor
        \vspace*{.2cm}
        \State{Compute successive error $\max_n\lbr\left|\rho^{n,k}-\rho^{n,k-1}\right|\rbr$ and $k\gets k+1$}
    \EndWhile
    \end{algorithmic}
\end{algorithm}

\begin{Lemma}[Conservation of mass]\label{lem:massCons}
  Let $(\rho_i^{n,k})_{(n,i)\in\II\times\{0,\dots,N_g\}}$ be given by Algorithm~\ref{algo:paraMS} and assume exact mass conservation of the hybrid numerical method \eqref{eq:SHMM}. We denote the mass of the system at time $t^n$ and parareal iteration $k$ as
  \begin{equation}
    m^{n,k} = \sum_{i\in\II} \rho_i^{n,k}\Dx.
  \end{equation}
  Then, for all $k>0$ and all $n\in\{0,\dots,N_g\}$,
  \begin{equation}\label{eq:MassCons}
    m^{n,k+1} = m^{0,0}.
  \end{equation}
\end{Lemma}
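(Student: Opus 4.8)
The plan is to show that every numerical propagator appearing in Algorithm~\ref{algo:paraMS} preserves the total mass $m[\rho]:=\sum_{i\in\II}\rho_i\Dx$ (so that $m^{n,k}=m[\rho^{n,k}]$), and that the parareal correction combines these propagators through an affine combination whose coefficients are arranged so that mass is retained. Concretely, I would write the parareal update in the standard form
\begin{equation*}
  \rho^{n,k+1} = \mathcal{S}^0(\rho^{n-1,k+1}) + \mathcal{S}^\text{HMM}(\rho^{n-1,k},g^{n-1,k}) - \mathcal{S}^0(\rho^{n-1,k}),
\end{equation*}
and apply the functional $m[\cdot]$. Since the two coarse contributions enter with opposite signs, they cancel at the level of masses, leaving only a relation between masses at consecutive time indices, and a double induction (on the parareal iteration $k$ and on the time index $n$) then propagates the single value $m^{0,0}$ throughout the whole array.

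First I would establish that the coarse propagator $\mathcal{S}^0$ conserves mass. Summing the density update of \eqref{eq:LimitScheme} over $i\in\II$, the increment $m_2^{\Dv}\frac{\Dt}{\Dx}(J_\iph^{n}-J_\imh^{n})$ is in conservative flux-difference form, so on the periodic torus $\TT$ it telescopes to zero; hence $m[\mathcal{S}^0(\rho)]=m[\rho]$ for any density. The density update of \eqref{eq:SHMM}, written $\rho_i^{\ep,n+1}=\rho_i^{\ep,n}+\frac{\Dt}{\Dx}\widetilde J_i^{\ep,n}$ with $\widetilde J$ again in flux-difference form, shares the same telescoping structure; its mass conservation is precisely the hypothesis of the lemma. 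Here it is essential that the lifted perturbation $g=\mathcal{L}^L\rho$ has zero discrete velocity mass, as recalled after Definition~\ref{def:lift}, so that the total mass carried by the pair $(\rho,g)$ equals $m[\rho]$ and the assumption reads $m[\mathcal{S}^\text{HMM}(\rho,\mathcal{L}^L\rho)]=m[\rho]$ at every call.

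Next I would run the induction. For the coarse guess, $\rho^{n,0}=\mathcal{S}^0(\rho^{n-1,0})$ gives $m^{n,0}=m^{n-1,0}$, and since $\rho^{0,0}$ is the fixed initial datum, an induction on $n$ yields $m^{n,0}=m^{0,0}$. For the inductive step, assume $m^{n,k}=m^{0,0}$ for all $n$; applying $m[\cdot]$ to the parareal update and using the two conservation properties,
\begin{align*}
  m^{n,k+1}&=m[\mathcal{S}^0(\rho^{n-1,k+1})]+m[\mathcal{S}^\text{HMM}(\rho^{n-1,k},g^{n-1,k})]-m[\mathcal{S}^0(\rho^{n-1,k})]\\
  &=m^{n-1,k+1}+m^{n-1,k}-m^{n-1,k}=m^{n-1,k+1}.
\end{align*}
Since the initial value is frozen across iterations, $m^{0,k+1}=m^{0,0}$; any time index not recomputed at this sweep retains its iteration-$k$ mass, equal to $m^{0,0}$ by the inductive hypothesis, and the recursion $m^{n,k+1}=m^{n-1,k+1}$ then propagates this value by a nested induction on $n$, giving $m^{n,k+1}=m^{0,0}$ for every $n$ and closing the induction.

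The computation is essentially routine once the two conservation properties are in place, so the main obstacle is bookkeeping rather than analysis: one must track the two indices $(n,k)$ simultaneously, correctly account for the time points that are frozen (not recomputed) at each parareal sweep and for the fixed initial condition $\rho^{0,k}\equiv\rho_\text{in}$, and verify that the zero-mass property of the discrete lifting is exactly what makes the hypothesis on $\mathcal{S}^\text{HMM}$ applicable to the freshly lifted $g^{n,k}$ at every iteration. The only genuinely computational ingredient is the flux-difference (telescoping) structure of $\mathcal{S}^0$ on the torus, which is immediate.
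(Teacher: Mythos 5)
Your proposal is correct and follows essentially the same route as the paper: mass conservation of $\mathcal{S}^0$ and the assumed conservation of $\mathcal{S}^\text{HMM}$ (made applicable by the zero-mass property of the lifted perturbation) make the fine and coarse contributions at iteration $k$ cancel in mass, yielding $m^{n,k+1}=m^{n-1,k+1}$, which is then propagated by induction on $n$ from the fixed initial mass. Your extra details --- the explicit telescoping of the flux differences on the torus and the careful handling of time indices frozen at each sweep --- are refinements the paper leaves implicit, not a different argument.
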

\begin{proof}
  A first observation is that, by construction of the parareal algorithm, for all $k\geq0$, 
  \begin{equation}\label{eq:massInit}
    m^{0,k}=m^{0,0},
  \end{equation}
  since the initial data is fixed. Then, let $n\in\{0,\dots,N_g\}$ and let $\left(\rho_{i}^n\right)_{i\in\II}$ be given by Algorithm~\ref{algo:paraMS}. By construction of the asymptotic scheme \eqref{eq:LimitScheme} and the assumption on the hybrid method \eqref{eq:SHMM} one has the following properties:
  \begin{equation}\label{eq:schemeCons}
    \sum_{i\in\II} \mathcal{S}^0\left(\rho^{n,k}\right)_i \Dx = m^{n,k},\text{ and}\quad \sum_{i\in\II} \mathcal{S}^\text{HMM}(\rho^{n,k})_i\Dx = m^{n,k}.
  \end{equation}
  In addition, updating the perturbation with \eqref{eq:SMM} preserves its $0$ mass by construction. This property carries over under the action of the hybrid scheme \eqref{eq:SHMM} as this property only depends on the velocity variable and not on the spatial coupling. Using the update relation of the parareal algorithm, one has:
  \begin{equation}
      m^{n,k+1} = \sum_{i\in\II} \mathcal{S}^0\left(\rho^{n-1,k+1}\right)_i \Dx + \sum_{i\in\II} \mathcal{S}^{\text{HMM}}\left(\rho^{n-1,k-1}\right)_i \Dx - \sum_{i\in\II} \mathcal{S}^0\left(\rho^{n-1,k-1}\right)_i \Dx
  \end{equation}
  Thanks to \eqref{eq:schemeCons}, it follows that for all $k\geq0$
  \begin{equation}
      m^{n,k+1} = m^{n-1,k+1}
  \end{equation}
  One then obtains \eqref{eq:MassCons} by induction over $n$, using \eqref{eq:massInit}.
\end{proof}

\begin{Remark}
  As pointed out in \cite{Laidin2023}, the hybrid method \eqref{eq:SHMM} is not exactly conservative. However, the mass variation is in practice negligible for reasonable coupling thresholds. In addition, the exact mass variation decays very fast as $\ep\to0$ (See Lemma 4.2 in \cite{Laidin2023}). Exact mass conservation could be achieved using, for example, different boundary conditions between kinetic and fluid states following the approaches in \cite{FilbetRey2015,CaparelloPareschiRey2025} by matching fluxes.
\end{Remark}

\section{Implementation}
\label{sec:Implementation}

The computational gain of Algorithm~\ref{algo:paraMS} compared to the standard AP scheme \eqref{eq:SMM} depends on two main factors. First, parallelization allows the computational load of the kinetic propagations that are needed to compute the correction, to be distributed efficiently. Second, the implementation of dynamic domain adaptation accelerates computations independently of parallel resources, providing speed-ups even without multiple processors, unlike the parareal algorithm.

Algorithm~\ref{algo:paraMS} is currently implemented in Fortran08 using a shared-memory framework with OpenMP. To minimize overhead from memory allocations and deallocations, arrays required for the hybrid propagation, that are executed in parallel, are preallocated per thread. In a distributed-memory framework, these considerations are replaced by communication management. Moreover, since the workload of the parallel loop can vary significantly depending on proximity to equilibrium, a dynamic scheduling was found to offer the best performance, despite the small additional overhead.

Assuming the cost of communications is negligible compared to the solver times --- which is reasonable in our shared-memory framework --- we denote by $T_{\text{HMM}}$ (resp. $T_{\text{Fluid}}$) the maximum time for \eqref{eq:SHMM} (resp. \eqref{eq:LimitScheme}) to evolve a given initial data from $T^n$ to $T^{n+1}$. The maximum is taken over all propagations because, on each sub-time interval, the stability conditions and the domain adaptation may vary. In addition, the cost of the lifting operator, which is fully non-local in both position (due to derivative computations) and velocity, can be significant; we denote this cost by $T_{\text{Lift}}$. The ideal numerical cost of Algorithm~\ref{algo:paraMS} on $N_p$ threads with $N_g$ sub-time intervals is then given by the formula:

\begin{equation*}
    T_{\text{Parareal}} = T_{\text{Fluid}} + N_g k \left(\frac{T_{\text{Lift}}+T_{\text{HMM}}+T_{\text{Fluid}}}{N_p} + T_{\text{Fluid}}\right).
\end{equation*}
Consequently, the ideal number of parareal iterations to be less costly than a fully kinetic simulation should satisfy:
\begin{equation*}
    T_{\text{Parareal}}\leq N_gT_{\text{HMM}}
\end{equation*}
or stated differently,
\begin{equation*}
    k_{\text{opt}} \leq \left\lceil\frac{N_gT_{\text{HMM}}-T_{\text{Fluid}}}{N_g\left(\frac{T_{\text{Lift}}+T_{\text{HMM}}+T_{\text{Fluid}}}{N_p}+T_{\text{Fluid}}\right)}\right\rceil.
\end{equation*}
The performance of the multiscale algorithm was evaluated on the architecture presented in Table~\ref{tab:architecture}.

\begin{table}
    \centering
    \begin{tabular}{c|c}
        \# CPUs &  Intel(R) Xeon(R) Platinum 8268 CPU @ 2.90GHz x2\\
        \hline
        \# cores & 48 \\
        \hline
        RAM & 750 GB \\
    \end{tabular}
    \caption{Computing architecture.}
    \label{tab:architecture}
\end{table}

\section{Numerical results}
\label{sec:Num}

For practical purposes, the velocity domain is truncated to the box $[-v_\star,v_\star]$, and we denote by $x_\star$ the length of the 1-dimensional torus $\TT$. Unless stated otherwise, the discretization parameters are fixed as
\begin{equation*}
N_x = 32,\quad N_{v_x} = 32,\quad N_{v_y} = N_{v_z} = 16,\quad x_\star = 2\pi,\quad v_\star = 8.
\end{equation*}
We consider the following far-from-equilibrium initial condition:
\begin{equation*}
f(0,x,v) = \M(v_x,v_y,v_z), v_x^4 \Bigl(1 + 0.9 \cos\left(\tfrac{2\pi x}{x_\star}\right)\Bigr).
\end{equation*}

The thresholds for the dynamic domain adaptation indicators are set to $\eta_0 = \delta_0 = 10^{-5}$. These empirical values are an order of magnitude lower than the ones used in \cite{Laidin2023} where their impact and accuracy was investigated. These thresholds are chosen as a good compromise between accuracy and acceleration. The perturbation $g$ is reconstructed at order $L=2$, which provides the most accurate results compared to lower-order reconstructions and is consistent with the derivation of the macroscopic indicators.

The number of coarse time intervals is set to $N_g=200$. This value guarantees convergence across all regimes. While regimes with $\ep \ll 1$ require relatively few coarse intervals, it was observed that in the kinetic, transport-dominated regime ($\ep \sim 1$), convergence is slower, and a larger value of $N_g$ is necessary.

Figures~\ref{fig:snapshotsDensity05} and \ref{fig:snapshotsEfield05} shows snapshots of the densities and electric field computed with Algorithm~\ref{algo:paraMS} for $\ep=0.5$, with and without activation of the multiscale procedures (parareal integration and domain adaptation). The corresponding results for $\ep=10^{-4}$ are shown in Figures~\ref{fig:snapshotsDensity0001} and \ref{fig:snapshotsEfield0001}. In both the kinetic regime ($\ep=0.5$) and the fluid regime ($\ep=10^{-4}$), the different methods agree closely. Furthermore, the asymptotic-preserving property is confirmed, as the hybrid solutions all coincide with the limiting fluid solution for small $\ep$.

\begin{figure}
  \includegraphics[width=.82\linewidth]{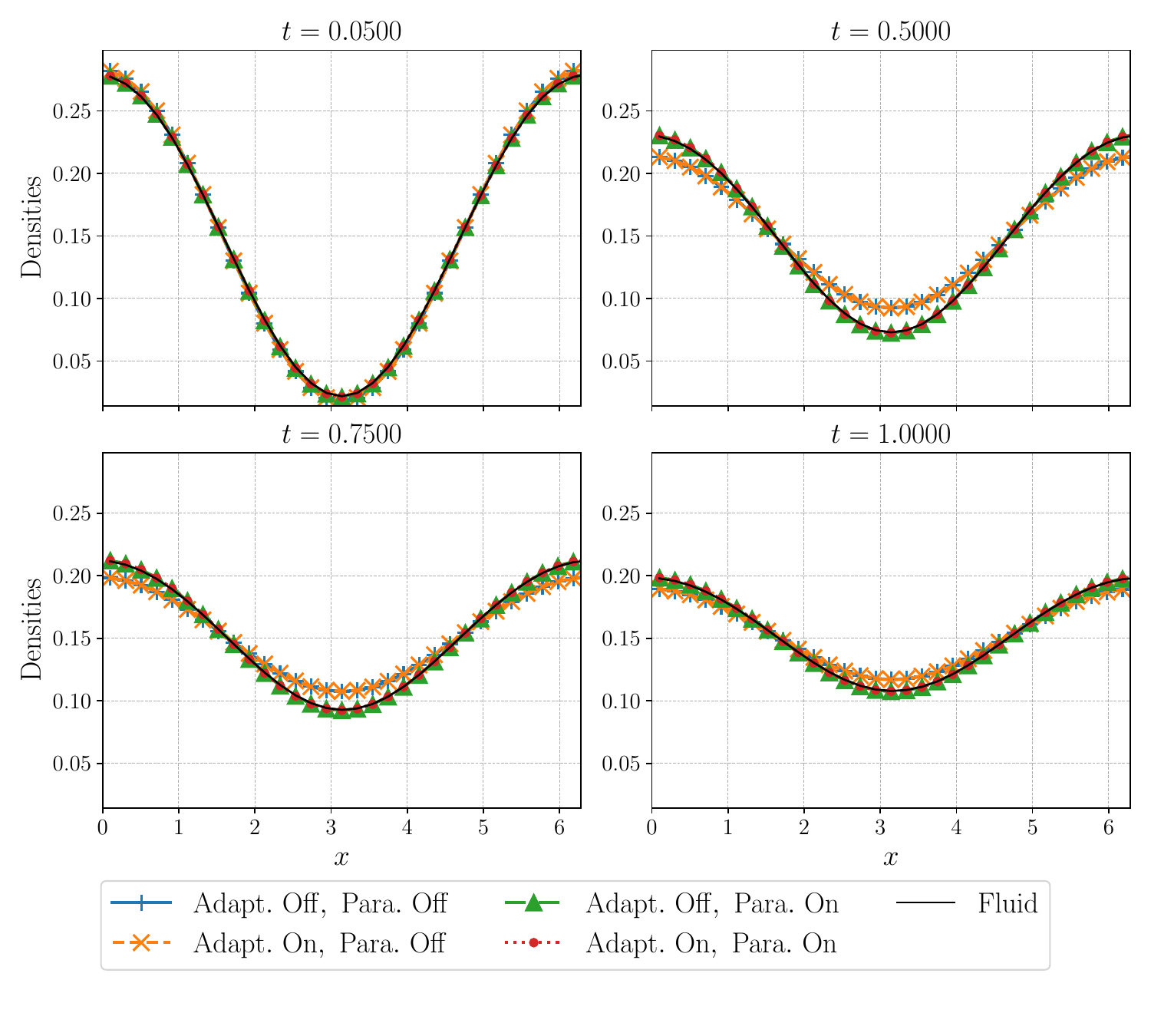}
  \caption{Density snapshots for $\ep=0.5$ obtained with Algorithm~\ref{algo:paraMS}.}
  \label{fig:snapshotsDensity05}
\end{figure}

\begin{figure}
  \includegraphics[width=.82\linewidth]{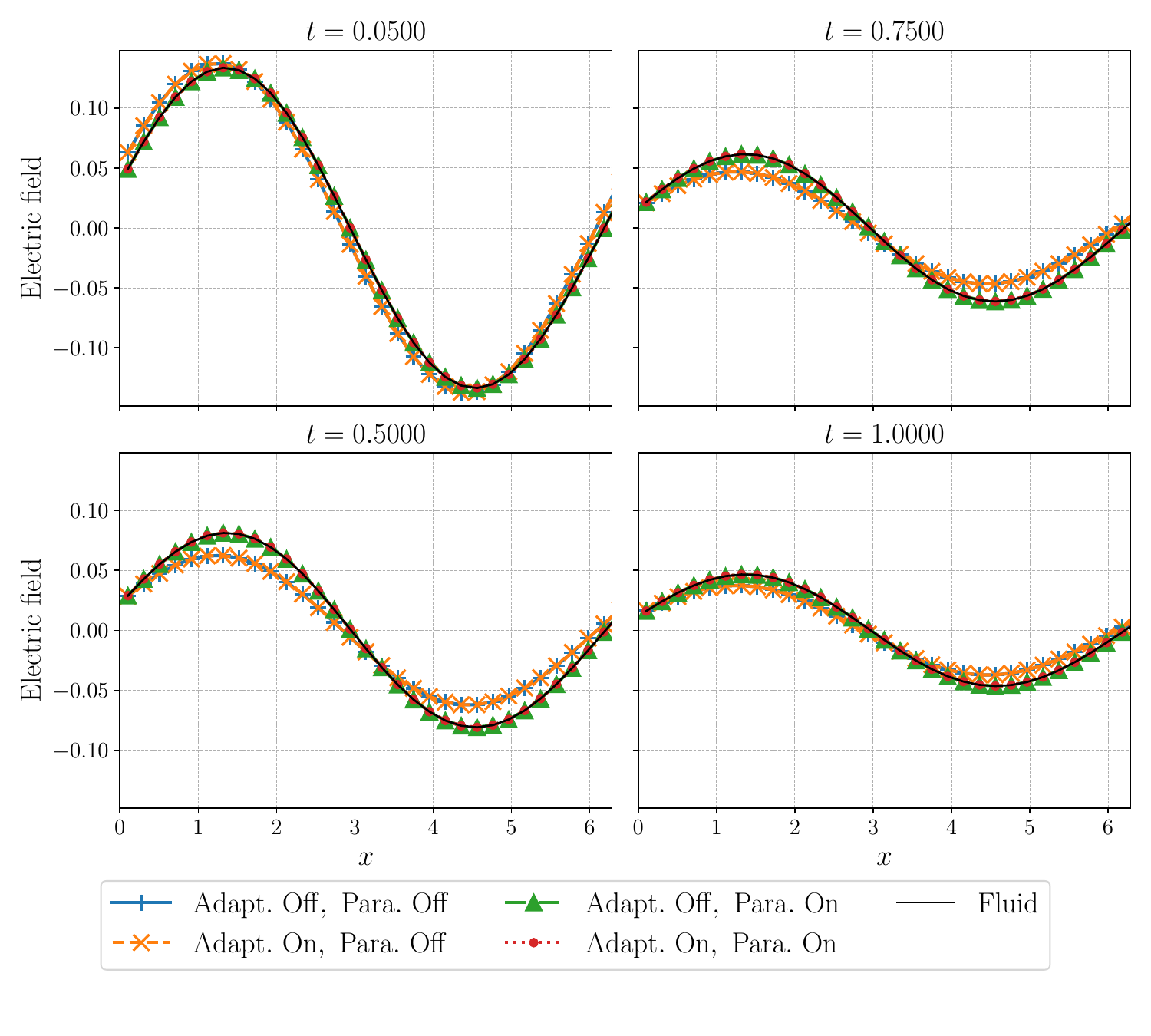}
  \caption{Electric field snapshots for $\ep=0.5$ obtained with Algorithm~\ref{algo:paraMS}.}
  \label{fig:snapshotsEfield05}
\end{figure}

\begin{figure}
  \includegraphics[width=.82\linewidth]{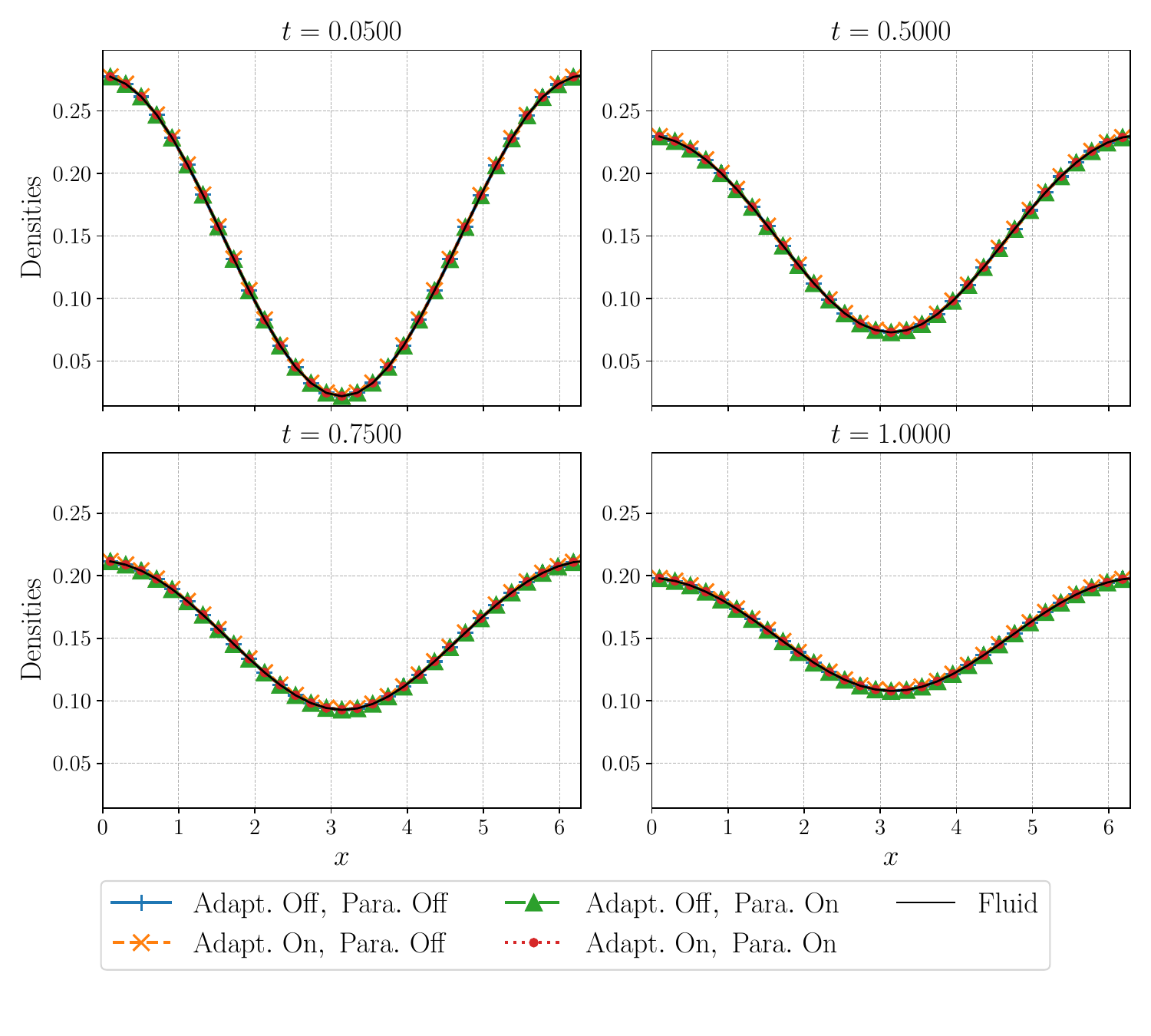}
  \caption{Density snapshots for $\ep=10^{-4}$ obtained with Algorithm~\ref{algo:paraMS}.}
  \label{fig:snapshotsDensity0001}
\end{figure}

\begin{figure}
  \includegraphics[width=.82\linewidth]{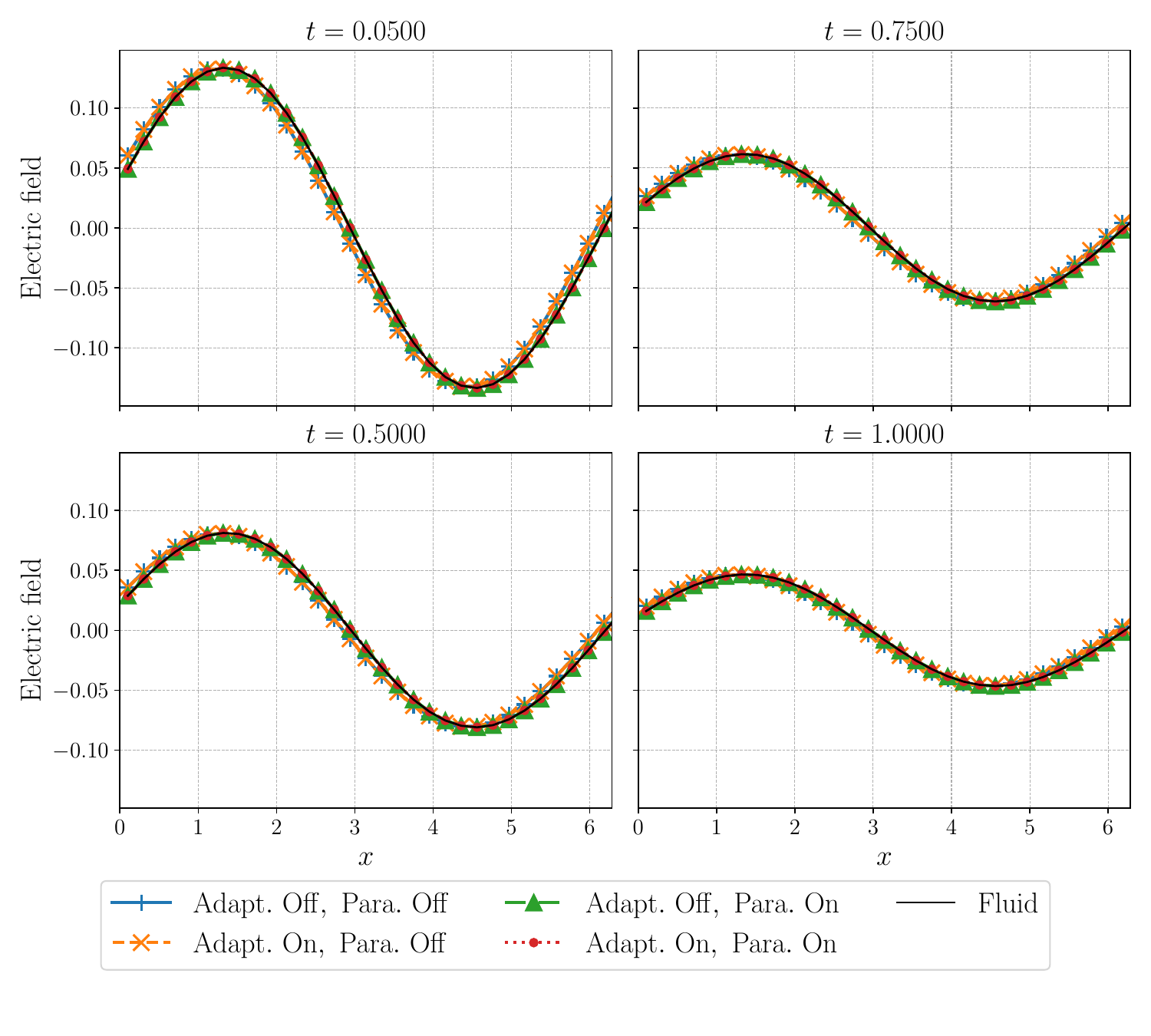}
  \caption{Electric field snapshots for $\ep=10^{-4}$ obtained with Algorithm~\ref{algo:paraMS}.}
  \label{fig:snapshotsEfield0001}
\end{figure}

\subsection{Convergence}
We investigate the convergence of Algorithm~\ref{algo:paraMS}. Figure~\ref{fig:ConvAlgo} shows the evolution of the error between two consecutive parareal iterations, for $\ep \in [10^{-7}, 1]$ with and without domain adaptation, at final time $T_f=1.0$. The algorithm converges up to machine accuracy in all cases, with iteration counts strongly dependent on $\ep$: fewer than five iterations suffice for small $\ep$, while about twenty are required when $\ep \sim 1$. This behavior reflects that, as $\ep \to 0$, the fluid predictor provides a more accurate description of the system. Since the limiting system is parabolic, convergence is further improved, consistent with the known difficulty of the parareal method in transport-dominated regimes.

Dynamic domain adaptation further accelerates convergence. Once activated, the predictor coincides with the fine correction, reducing the number of parareal iterations. This improvement must be weighed against the computational trade-offs of looser coupling thresholds and the accuracy loss inherent in adopting the fluid approximation.

Figure~\ref{fig:LinfError} reports the $L^\infty$ error over time between the classical approximation (without parareal integration or domain adaptation) and the multiscale methods. Without parareal integration, the error remains small, since domain adaptation is only triggered in fluid regions where it provides a good approximation. With parareal integration, a maximum error of order $10^{-2}$ is observed in the kinetic regime $\ep=0.5$, reflecting that the fluid predictor is too crude and even a second-order reconstruction of the perturbation cannot fully compensate. Nevertheless, second-order reconstruction yields the smallest error compared to lower-order variants. In the fluid regime $\ep=10^{-4}$, the error relative to the classical scheme drops to about $10^{-5}$. Finally, as the long-time behavior of \eqref{eq:kinetic} is relaxation to a constant state determined by the torus length and the initial mass~\cite{DolbeaultMouhotSchmeiser2015}, the error naturally vanishes as $t \to \infty$. It should be noted that the errors in Figure~\ref{fig:LinfError} are of a different nature from those in Figure~\ref{fig:ConvAlgo}. The latter illustrates the convergence of Algorithm~\ref{algo:paraMS}, without indicating the solution it converges to. A detailed analysis of this aspect lies beyond the scope of this work and would require a careful study of the multiscale operators’ properties, as performed in~\cite{SamaeySlawig2023}.

\begin{figure}
  \includegraphics[width=.49\linewidth]{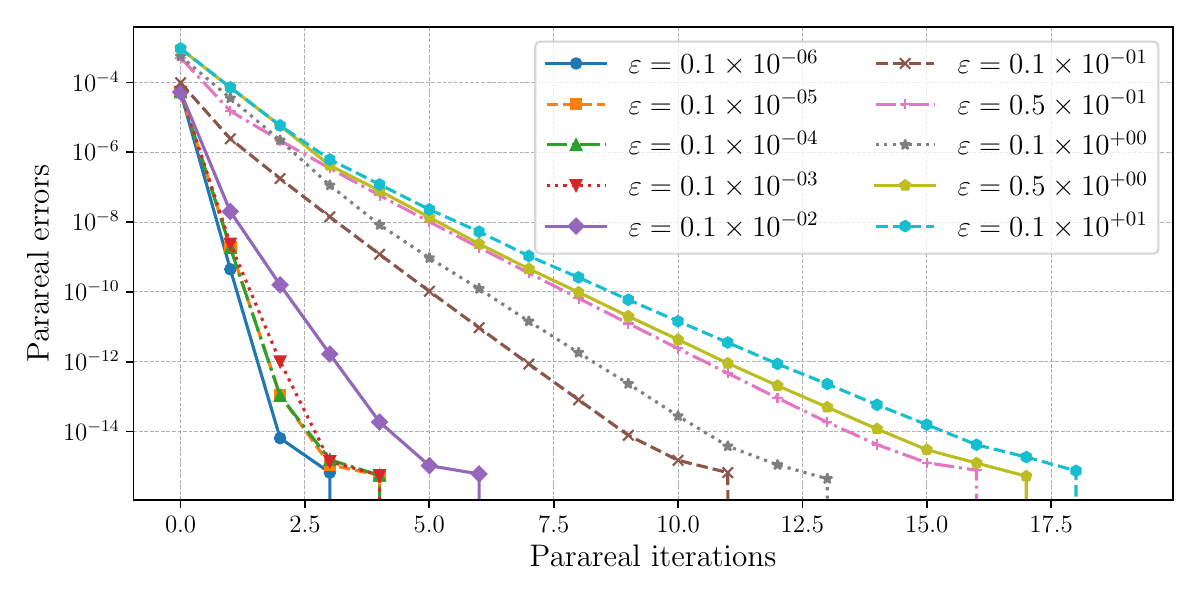}
  \includegraphics[width=.49\linewidth]{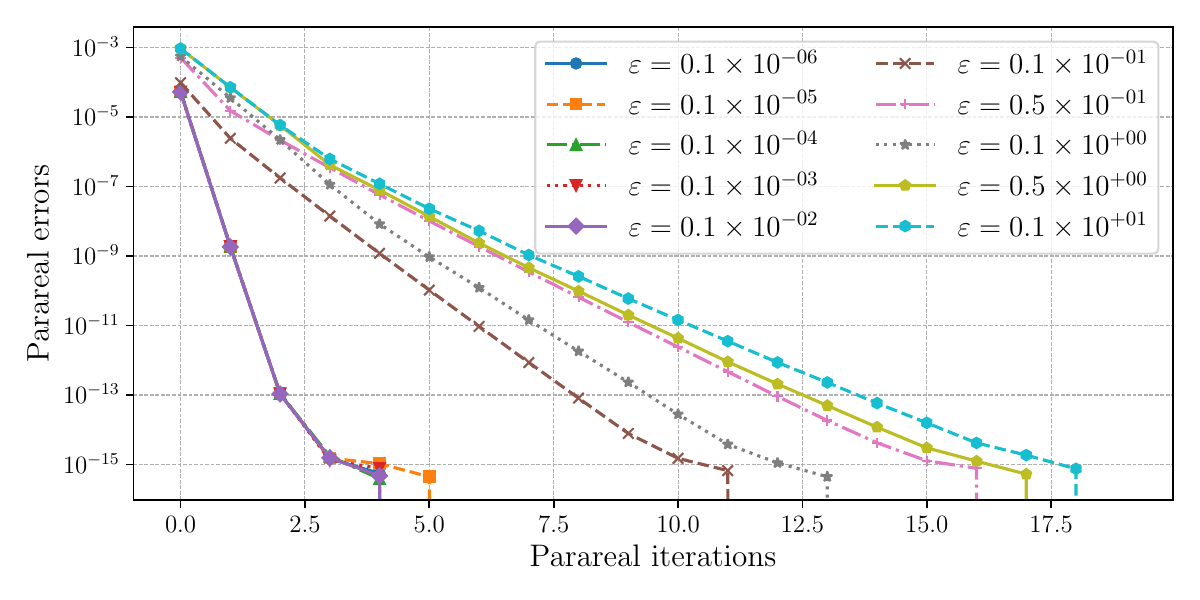}
  \caption{Convergence of Algorithm~\ref{algo:paraMS} for different values of $\ep$: without domain adaptation (left) and with domain adaptation (right).}
  \label{fig:ConvAlgo}
\end{figure}

\begin{figure}
  \includegraphics[width=.49\linewidth]{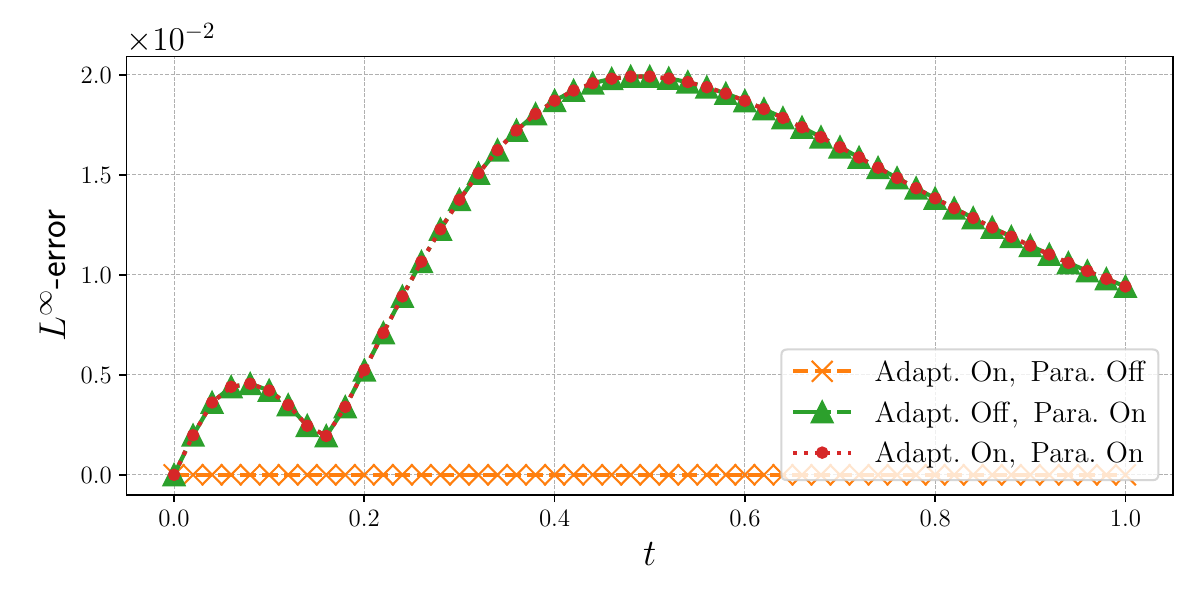}
  \includegraphics[width=.49\linewidth]{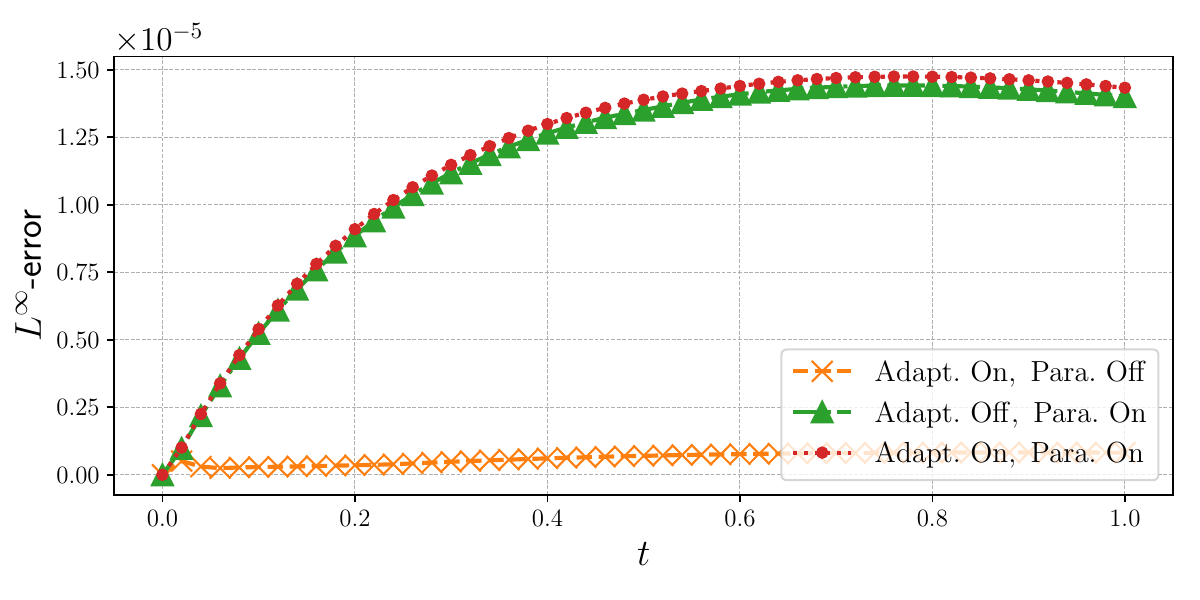}
  \caption{$L^\infty$-error of Algorithm~\ref{algo:paraMS} with respect to the classical scheme without parareal integration or domain adaptation, for $\ep=0.5$ (left) and $\ep=10^{-4}$ (right).}
  \label{fig:LinfError}
\end{figure}

\subsection{Performances}
To assess the performance of the fully hybrid method, we set $N_x = 32$, $N_{v_x} = 32$, $N_{v_y} = N_{v_z} = 16$ and $N_g=200$. In addition, the error threshold in the parareal algorithm is set to $10^{-10}$ and the coupling parameters are again set to $\eta_0 = \delta_0 = 10^{-5}$. The number of OpenMP threads is fixed to $48$ on the architecture described in Table~\ref{tab:architecture}. The corresponding runtimes and speedups are reported in Table~\ref{tab:runtimes-speedups}.

The results show that Algorithm~\ref{algo:paraMS} consistently outperforms the standard kinetic scheme for all values of $\ep$. This demonstrates the ability of the multiscale method to exploit parallel-in-time acceleration in kinetic regimes while benefiting from domain adaptation in transient and near-fluid regimes. Moreover, the method remains stable across the full range of $\ep$, confirming its AP property.

Without domain adaptation, for $\ep=1$ we observe a speedup of about $2$, which increases to $3$ at $\ep=10^{-2}$ and $7$ at $\ep=10^{-4}$. This first shows that the multiscale parareal algorithm applied to \eqref{eq:kinetic} can already reduce the computational cost of the time integration.

Enabling the domain adaptation, which mainly occurs in near-fluid regimes or in long time \cite{Laidin2023}, one observes that it indeed helps to further reduce the computational time with a speedup factor that reaches up to $73$ at $\ep=10^{-4}$. In the case $\ep = 10^{-3}$ (italicized values), a numerical interplay was observed between the two hybrid methods, and Algorithm~\ref{algo:paraMS} converged only to a successive error of $10^{-5}$, after five iterations. The runtime corresponding to these five iterations was used to compute the speedup.

Several remarks must be made regarding the performance of the algorithm. First, even with a relatively small number of grid points in each variable, significant speedups are already observed. These would naturally become even more significant as the cost of the kinetic solver increases drastically by refining the mesh. Secondly, the performance of the domain adaptation is directly related to the coupling thresholds $\eta_0$ and $\delta_0$ that are currently chosen empirically.

\begin{table}
  \centering
  \renewcommand{\arraystretch}{1.2}
  \begin{tabular}{c|c|c|c|c|c}
    & \multicolumn{4}{c|}{Runtimes (s)} & \\
    & \multicolumn{4}{c|}{\small \textbf{Speedups}} & \\
    \cline{2-5}
    & \multicolumn{2}{c|}{Para.~Off} & \multicolumn{2}{c|}{Para.~On} & \\
    \cline{2-5}
    $\ep$
      & Adapt.~Off & Adapt.~On
      & Adapt.~Off & Adapt.~On
      & Fluid \\
    \hline
    $1.0$ &
      \begin{tabular}{c}96.1\\\hline \textbf{1.00}\end{tabular} &
      \begin{tabular}{c}104.6\\\hline \textbf{0.92}\end{tabular} &
      \begin{tabular}{c}42.38\\\hline \textbf{2.27}\end{tabular} &
      \begin{tabular}{c}44.5\\\hline \textbf{2.16}\end{tabular} &
      \begin{tabular}{c}0.25\\\hline \textbf{384}\end{tabular} \\
    \hline
    $10^{-1}$ &
      \begin{tabular}{c}96.1\\\hline \textbf{1.00}\end{tabular} &
      \begin{tabular}{c}104.2\\\hline \textbf{0.92}\end{tabular} &
      \begin{tabular}{c}35.2\\\hline \textbf{2.73}\end{tabular} &
      \begin{tabular}{c}37.5\\\hline \textbf{2.56}\end{tabular} &
      \begin{tabular}{c}0.25\\\hline \textbf{384}\end{tabular} \\
    \hline
    $10^{-2}$ &
      \begin{tabular}{c}96.1\\\hline \textbf{1.00}\end{tabular} &
      \begin{tabular}{c}99.5\\\hline \textbf{0.97}\end{tabular} &
      \begin{tabular}{c}31.1\\\hline \textbf{3.09}\end{tabular} &
      \begin{tabular}{c}32.5\\\hline \textbf{2.96}\end{tabular} &
      \begin{tabular}{c}0.25\\\hline \textbf{384}\end{tabular} \\
    \hline
    $10^{-3}$ &
      \begin{tabular}{c}96.1\\\hline \textbf{1.00}\end{tabular} &
      \begin{tabular}{c}20.6\\\hline \textbf{4.67}\end{tabular} &
      \begin{tabular}{c}17.0\\\hline \textbf{5.65}\end{tabular} &
      \begin{tabular}{c}\textit{4.0}\\\hline \textbf{\textit{24.02}}\end{tabular} &
      \begin{tabular}{c}0.25\\\hline \textbf{384}\end{tabular} \\
    \hline
    $10^{-4}$ &
      \begin{tabular}{c}96.1\\\hline \textbf{1.00}\end{tabular} &
      \begin{tabular}{c}0.19\\\hline \textbf{506}\end{tabular} &
      \begin{tabular}{c}13.0\\\hline \textbf{7.39}\end{tabular} &
      \begin{tabular}{c}1.32\\\hline \textbf{72.80}\end{tabular} &
      \begin{tabular}{c}0.25\\\hline \textbf{384}\end{tabular} \\
    \hline
  \end{tabular}
  \caption{Runtimes and speedups of Algorithm~\ref{algo:paraMS} for different $\ep$.}
  \label{tab:runtimes-speedups}
\end{table}

\subsection{Mass conservation}
A key feature of \eqref{eq:kinetic} is the conservation of moments. In the linear relaxation case, only the mass is conserved. Figure~\ref{fig:massVar} reports the evolution of the mass variation
\begin{equation}
\Delta m^n = \sum_{i\in\II} \bigl(\rho_i^n - \rho_i^0\bigr)\Dx.
\end{equation}
Consistently with Lemma~\ref{lem:massCons}, Algorithm~\ref{algo:paraMS} preserves mass up to machine precision, independently of the activation of the dynamic domain adaptation.

\begin{figure}
\includegraphics[width=.49\linewidth]{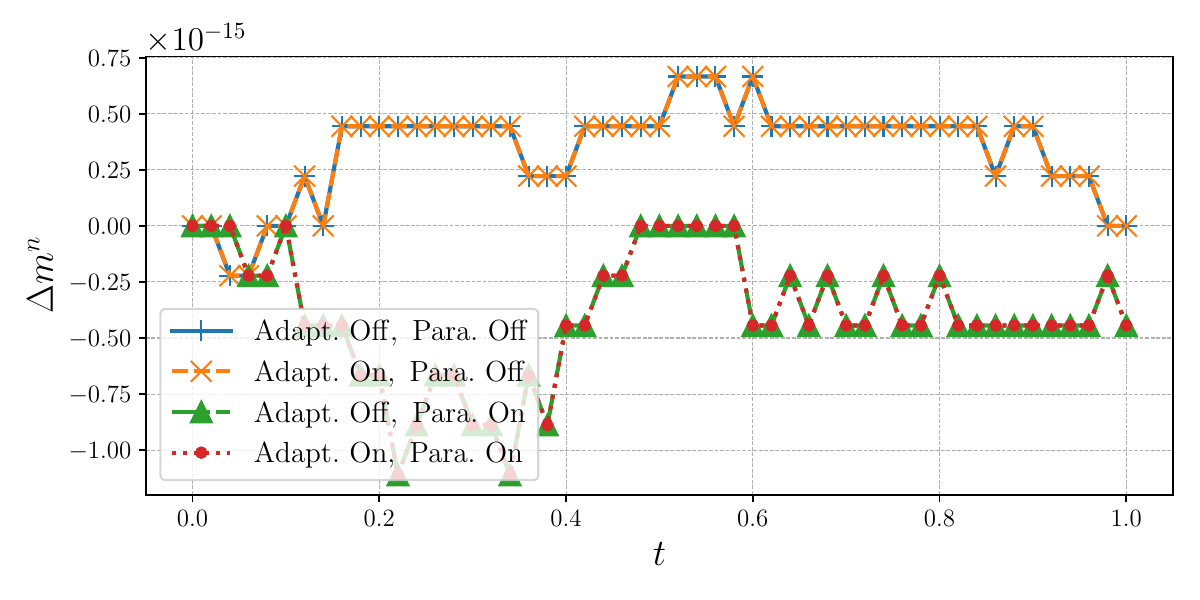}
\includegraphics[width=.49\linewidth]{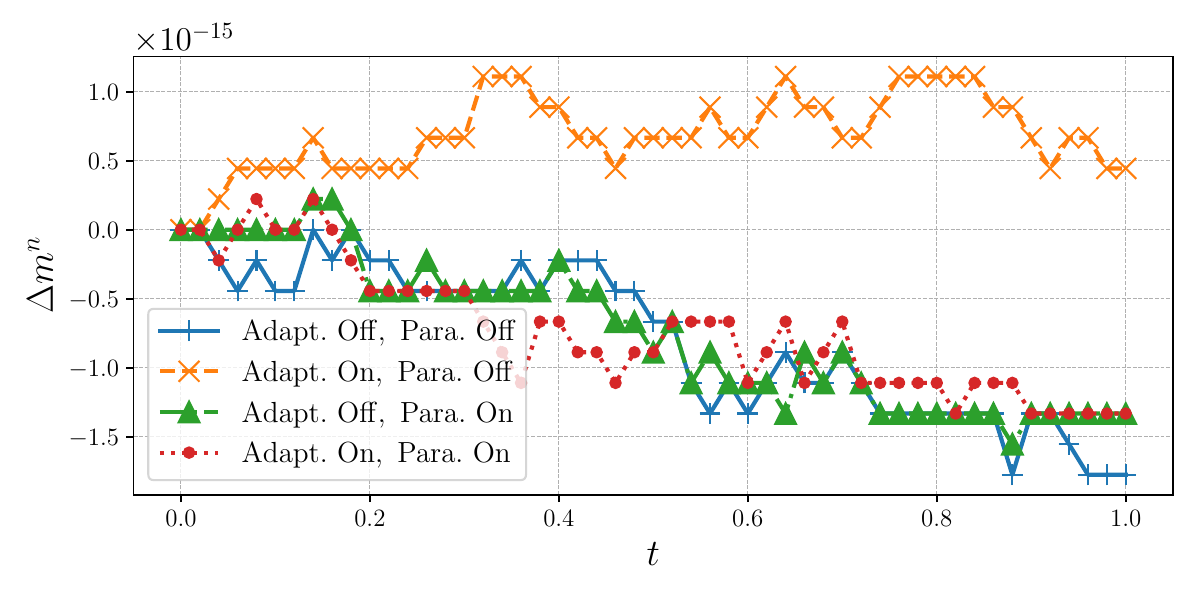}
\caption{Mass variation for Algorithm~\ref{algo:paraMS} with $\ep=0.5$ (left) and $\ep=10^{-4}$ (right).}
\label{fig:massVar}
\end{figure}

\section{Conclusion}
In this work we have presented a fully multiscale method for kinetic equations. The approach combines the advantages of AP schemes, dynamic domain adaptation, and parallel-in-time integration, leading to a significant reduction in numerical cost across all regimes of the scaling parameters. Apart from minimal numerical thresholds, the method accelerates computations automatically, without requiring user intervention or solver selection.

In addition, even larger computational gains can be expected in more complex settings, such as higher-dimensional problems in physical space, boundary conditions, or Boltzmann-type collision operators that significantly increase the cost of kinetic solvers, enhancing the need for accelerating methods.

The current implementation relies on a shared-memory framework. Possible extensions include implementing the method to distributed-memory architectures and parallelizing the hybrid solvers within the parareal algorithm. The method appears promising to extend towards more physically relevant models such as two species systems appearing in semiconductor modelling or plasma physics.

\section*{Acknowledgement}
The author would like to thank T. Rey and the Université Côte d'Azur for the computing resources used in this work.

The author has received funding from the European Research Council (ERC) under the European Union’s Horizon 2020 research and innovation program (grant agreement No 865711).

\bibliographystyle{acm}
\bibliography{STHBiblio}

\end{document}